\newtheorem{thm}{Theorem}
\newtheorem{cor}[thm]{Corollary}
\newtheorem{theorem}[thm]{Theorem}
\theoremstyle{definition}
\newtheorem{remark}[thm]{Remark}
\newtheorem{problem}[thm]{Problem}
\def\D{\mathcal D}
\newcommand{\CPb}{\overline{\mathbb{CP}}{}^{2}}
\newcommand{\CP}{{\mathbb{CP}}{}^{2}}
\newcommand{\Z}{\mathbb{Z}}
\newcommand{\N}{\mathbb{N}}
\newcommand{\K}{{\rm K3}}
\def \x {\times}
\def \eu{{\text{e}}}
\newcommand{\nc}{\newcommand}
\nc{\dmo}{\DeclareMathOperator}
\dmo{\MCG}{Mod}
\dmo{\Diff}{Diff}
\begin{document}

\title[Minimality and fiber sum decompositions of Lefschetz fibrations]
{Minimality and fiber sum decompositions of Lefschetz fibrations}

\author[R. \.{I}. Baykur]{R. \.{I}nan\c{c} Baykur}
\address{Department of Mathematics and Statistics, University of Massachusetts, Amherst, MA 01003, USA}
\email{baykur@math.umass.edu}

\begin{abstract}
We give a short proof of a conjecture of Stipsicz on the minimality of fiber sums of Lefschetz fibrations, which was proved earlier by Usher. We then construct the first examples of genus $ g \geq 2$ Lefschetz fibrations on minimal
symplectic 4-manifolds which, up to diffeomorphisms of the summands, admit unique decompositions as fiber sums.
\end{abstract}


\maketitle

\setcounter{secnumdepth}{2}
\setcounter{section}{0}

\section{Introduction} 

Despite the spectacular advancement in our understanding of the topology of symplectic $4$-manifolds in the past couple of decades, we still lack even a surgical classification scheme. As the generalized fiber sum operation can be performed in the symplectic realm, symplectic $4$-manifolds can be thought as sums of indecomposable ones. Although tracking this sort of data is usually very difficult, certain smooth information, such as minimality and Kodaira dimension of a symplectic\linebreak $4$-manifold is not out of reach \cite{Us1, Us2}. A more tractable set-up is when the symplectic $4$-manifold is equipped with a Lefschetz fibration (and all can be, after blow-ups, by Donaldson's seminal work on the existence of Lefschetz pencils on symplectic $4$-manifolds), where the decomposition is restricted to the standard fiber sum of Lefschetz fibrations: in this case, the sum always takes place along fibers, and translates to a combinatorial problem of expressing a positive Dehn twist factorization of the identity in the mapping class group as a product of such subwords. This extra structure indeed allows one to stably classify symplectic $4$-manifolds equipped with Lefschetz fibrations \cite{Au2, EndoEtal}. The purpose of this note is to further explore the properties of fiber sum decompositions of symplectic $4$-manifolds equipped with Lefschetz fibrations. 

We call a Lefschetz fibration \textit{indecomposable}, if it cannot be expressed as a fiber sum of two nontrivial Lefschetz fibrations. Indecomposable Lefschetz fibrations were first detected by Stipsicz in \cite{St1}, who proved that a relatively minimal Lefschetz fibration over the $2$-sphere admitting a $(-1)$-section is fiber sum indecomposable (also see \cite{Sm1}). Motivated by this, Stipsicz conjectured that in general a non-minimal $4$-manifold cannot be a fiber sum of two non-trivial (i.e. with at least one singular fiber) relatively minimal Lefschetz fibrations \cite[Conjecture~2.3]{St1}. This conjecture was later proved by Usher \cite{Us1}, by a thorough analysis of the effect of the fiber sum operation on the Gromov invariants of the symplectic $4$-manifold, where he made extensive use of relative Gromov invariants and gluings results due to Ionel and Parker. Relying on the additional Lefschetz fibration structure, we will provide a shorter and simpler proof of the same result, which is obtained in the same spirit as Stipsicz's original approach in the presence of $(-1)$-sections \cite{St1} and that of Sato's in the presence of $(-1)$-bisections of genus $\geq 2$ Lefschetz fibrations \cite{Sa1}. This is given in Theorem~\ref{minimality} below. A corollary we obtain is a recap of another result of Usher \cite{Us2} in this setting, which can be formulated as \textit{``fiber sums of Lefschetz fibrations produce only symplectic $4$-manifolds with positive Kodaira dimension''}; see Corollary~\ref{ruled} below. These results are proved in Section~3. 

Clearly, any Lefschetz fibration is a fiber sum of indecomposable ones, possibly with only one summand. In comparison with other prime decomposition theorems in low dimensional topology, it is natural to ask for which Lefschetz fibrations the summands, as \textit{smooth} $4$-manifolds, are uniquely determined. This is known to hold for genus $1$ Lefschetz fibrations, in which case, the elliptic surface $E(n)$ decomposes exactly into $n$ summands of $E(1)$. However, such a \textit{unique} prime decomposition result for Lefschetz fibrations, even up to diffeomorphisms of the summands (as much as it would have been wonderful to have it!), fails to hold once $g \geq 2$; in fact, many examples with multiple decompositions can be produced by looking at the stable equivalence of Lefschetz fibrations under fiber sums with certain universal fibrations; see \cite{EndoEtal}.

Nevertheless, as reviewed above, Lefschetz fibrations on non-minimal $4$-manifolds provide vacuous examples of uniquely decomposing genus $g \geq 2$ fibrations. To the best of our knowledge, no such fibrations with more than one summand were known, nor were there examples of uniquely decomposing ones on minimal Lefschetz fibrations. Our second main theorem, Theorem~\ref{unique}, shows that for almost all $g \geq 1$ (except possibly for a few small values of $g$), and for any given $m \geq 2$, there are infinite families of genus $g$ Lefschetz on minimal $4$-manifolds which decompose uniquely into $m$ indecomposable summands, up to diffeomorphisms. (When the fiber genus $g>1$, the uniqueness is achieved for given $m$, i.e., these can possibly decompose into other numbers of indecomposable summands.) Our proof of Theorem~\ref{unique} will be based on Stipsicz's analysis of Lefschetz fibrations on ruled surfaces in \cite{St2}, and gives rise to a couple of interesting questions we pose at the end of Section~4. In the Appendix, we will revise and correct the proof of the main theorem of \cite{St2} we used here.

\vspace{0.1in}
\noindent \textit{Acknowledgements.}  The author was partially supported by the NSF Grant DMS-1510395 and the Simons Foundation Grant 317732.



\vspace{0.1in}
\section{Preliminaries} 

In this article we will always work with closed, smooth, oriented $4$-manifolds. A \emph{genus $g$ Lefschetz fibration} $(X,f)$ is a surjection $f$ from a $4$-manifold $X$ onto $S^2$ that is a submersion on the complement of finitely many points $p_i$, at which there are local complex coordinates (compatible with the orientations on $X$ and $S^2$) with respect to which the map takes the form $(z_1, z_2) \mapsto z_1 z_2$, where the genus of a regular fiber $F$ is $g$. We will moreover assume that all $p_i$ lie in distinct fibers; this can always be achieved after a small perturbation. We call a fibration \emph{nontrivial} if it has at least one critical point, and \emph{relatively minimal}, if there are no spheres of self-intersection $-1$ contained in the fibers. By the Gompf-Thurston construction, total spaces of nontrivial Lefschetz fibrations always admit symplectic forms with respect to which all regular fibers are symplectic. 

A widely used way of constructing new Lefschetz fibrations from given ones is the \emph{fiber sum} operation, defined as follows: Let $(X_i, f_i)$, $i=1,2$, be genus $g$ Lefschetz fibrations with regular fiber $F$. The \emph{fiber sum} \, $(X_1, f_1)\#_{F, \Phi}(X_2,f_2)$ is a genus $g$ Lefschetz fibration obtained by removing a fibered tubular neighborhood of a regular fiber from each $(X_i, f_i)$ and then identifying the resulting boundaries via a fiber-preserving, orientation-reversing diffeomorphism $\Phi$. We drop $\Phi$ from the notation whenever the gluing is made so that the fibers are identified by the identity map on $F$, which is often called an \emph{untwisted} fiber sum. A Lefschetz fibration $(X,f)$ is called \emph{indecomposable} if it cannot be expressed as a fiber sum of any two nontrivial Lefschetz fibrations. Such fibrations can be regarded as prime building blocks of Lefschetz fibrations.

Lastly, let us review the notion of \emph{symplectic Kodaira dimension} we will repeatedly refer to in our discussions. First, recall that a symplectic $4$-manifold $(X, \omega)$ is called \emph{minimal} if it does not contain any embedded symplectic sphere of square $-1$, and that it can always be blown-down to a minimal symplectic\linebreak $4$-manifold $(X_{\text{min}}, \omega')$. Let $\kappa_{X_{\text{min}}}$ be the canonical class of $(X_{\text{min}}, \omega_{\text{min}})$. We then define the symplectic Kodaira dimension of $(X, \omega)$, denoted by $\kappa=\kappa(X,\omega)$ as 
\[
\kappa(X,\omega)=\left\{\begin{array}{rl}-\infty& \mbox{if
}\kappa_{X_{\text{min}}}\cdot[\omega_{\text{min}}]<0 \mbox{ or } \kappa_{X_{\text{min}}}^{2}<0 \\
0 & \mbox{if } \kappa_{X_{\text{min}}}\cdot[\omega_{\text{min}}]=\kappa_{X_{\text{min}}}^{2}=0\\ 1 &
\mbox{if }\kappa_{X_{\text{min}}}\cdot[\omega_{\text{min}}]>0\mbox{ and
}\kappa_{X_{\text{min}}}^{2}=0\\2& \mbox{if }\kappa_{X_{\text{min}}}\cdot[\omega_{\text{min}}]>0\mbox{
and }\kappa_{X_{\text{min}}}^{2}>0\end{array}\right.
\]
Here $\kappa$ is independent of the minimal model $(X_{\text{min}}, \omega_{\text{min}})$ and is a smooth invariant of the $4$-manifold $X$. 
 
The reader can turn to \cite{GS} for more on Lefschetz fibrations and to \cite{Li3} for the symplectic Kodaira dimension.

\vspace{0.1in}
\section{Minimality and fiber sum decompositions} 

Here we prove Stipsicz's conjecture on the minimality of fiber sums of relatively minimal Lefschetz fibrations. 

\begin{theorem} \label{minimality}
Fiber sum of two nontrivial relatively minimal Lefschetz fibrations is a minimal $4$-manifold.
\end{theorem}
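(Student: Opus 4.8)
The plan is to argue by contradiction, combining the positivity of intersections for a \emph{fibered} almost complex structure with the relative minimality of the two summands, thereby avoiding the relative Gromov-invariant gluing machinery used by Usher. Write $X=(X_1,f_1)\#_{F,\Phi}(X_2,f_2)$ with both summands nontrivial and relatively minimal of fiber genus $g$. First I would record two reductions. Since each summand has a critical point but no $(-1)$-sphere in its fibers, a nodal genus-$0$ fiber (a pair of $(-1)$-spheres) is excluded, so $g\geq 1$. And since the total space of a nontrivial Lefschetz fibration carries a symplectic form and hence has $b^+\geq 1$, the fiber-sum gluing along the positive-genus surface $F$ forces $b^+(X)>1$; this is essentially the only place the nontriviality of \emph{both} summands enters. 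Now suppose, for contradiction, that $X$ is non-minimal. Then $X$ contains an embedded symplectic $(-1)$-sphere, giving a class $E$ with $E^2=-1$ and, by the adjunction equality, $K_X\cdot E=-1$.

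Because $b^+(X)>1$ and $E$ is an exceptional class, Taubes' $\mathrm{SW}=\mathrm{Gr}$ theorem gives $\mathrm{Gr}_X(E)=\pm1$, so for a generic $\omega$-compatible $J$ the class $E$ has an embedded $J$-holomorphic representative $C$, and the adjunction equality forces $g(C)=0$. I would take $J$ to be fibered, i.e.\ making $f$ pseudoholomorphic, so that every regular and singular fiber is $J$-holomorphic. Positivity of intersections between the $J$-holomorphic curves $C$ and $F$ then yields $d:=E\cdot[F]=C\cdot F\geq 0$, with $C$ meeting each regular fiber in exactly $d$ points.

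The case $d=0$ is handled in the spirit of Stipsicz. Here positivity forces $C$ to be disjoint from every regular fiber. If $f|_C$ were nonconstant it would be a nonconstant holomorphic map $S^2\to S^2$, hence surjective, so $C$ would meet every fiber and $d>0$; thus $f|_C$ is constant and $C$ lies in a single fiber. This fiber cannot be regular, since $g(C)=0<g$ and $C^2=-1\neq 0$, so $C$ is an irreducible $J$-holomorphic sphere of square $-1$ sitting inside a singular fiber, contradicting relative minimality of the relevant summand.

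The case $d>0$ is the main obstacle, and I would treat it in the spirit of Sato's bisection argument. Viewing $C$ as a degree-$d$ multisection, I would cut $X$ along the neck $F\times S^1$ separating $X_1\setminus\nu F$ from $X_2\setminus\nu F$, write $C=C_1\cup_\partial C_2$, and cap each piece off inside $\nu F\cong F\times D^2$ by parallel copies of $\{pt\}\times D^2$ to obtain closed symplectic multisections $\widehat C_i\subset X_i$ with $\widehat C_i\cdot F=d$. In the generic situation the caps are disjoint and carry trivial normal data, so the self-intersection ledger reads $-1=E^2=\widehat C_1^{\,2}+\widehat C_2^{\,2}$, forcing one summand, say $X_1$, to contain a symplectic multisection of negative square. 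The crux, and the step I expect to be delicate, is that a negative-square multisection is not by itself forbidden (indeed $(-1)$-sections occur on indecomposable fibrations), so one must use \emph{both} halves together: the problem is to control the genera and boundary framings of the capped pieces tightly enough that the constraints from the two summands, via the adjunction inequality and positivity, are jointly incompatible—ideally producing a forbidden fiber $(-1)$-sphere or an adjunction violation in one summand. This keeps the entire argument inside Lefschetz-fibration geometry and positivity of intersections, rather than invoking relative Gromov invariants and the Ionel--Parker gluing theorems.
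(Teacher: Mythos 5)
Your setup (Thurston--Gompf form, fibered $J$, Taubes' ${\rm SW}={\rm Gr}$ producing a $J$-holomorphic representative $C$ of the exceptional class, and the dichotomy on $d = C\cdot F$) matches the paper's, and your $d=0$ case is fine. But the case $d>0$ --- which is the entire content of the theorem beyond Stipsicz's original argument for fibrations with $(-1)$-sections --- is not proved: you explicitly stop at ``the step I expect to be delicate,'' so what you have is a strategy, not a proof. Worse, the strategy as stated runs into two concrete obstructions. First, after capping you want to run the adjunction inequality inside the individual summands $X_i$, but it is not available there: a summand of a fiber sum can perfectly well have $b^+(X_i)=1$ (relatively minimal fibrations on blown-up ruled surfaces, the very examples used in Theorem~\ref{unique}, are of this kind), and for $b^+=1$ manifolds the Seiberg--Witten adjunction bound you need does not hold in that form. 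Second, the caps themselves are problematic: a boundary circle of $C_1$ on the neck $F\times S^1$ may cover the base circle $\partial D^2$ with multiplicity greater than $1$ (the sheets of a multisection can be permuted by the monodromy around the neck), and such a circle is not capped off by an embedded disk of the form $\{pt\}\times D^2$ in $F\times D^2$; and, as you note yourself, a negative-square multisection in one summand is no contradiction at all, so no ledger of the form $\widehat C_1^{\,2}+\widehat C_2^{\,2}=-1$ can close the argument one side at a time.

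The paper's proof supplies exactly the two ideas your outline is missing. Instead of capping inside $X_i$, it forms the untwisted double $X_i \,\#_F\, X_i$ of each summand; this is again a fiber sum of two nontrivial fibrations, hence has $b^+>1$ by Stipsicz, and each piece $S_i$ of the cut-open sphere doubles to a closed surface (a disjoint union of surfaces $\Sigma_{k_j-1}$) no matter how its boundary circles wrap around the neck, with self-intersection twice the relative self-intersection of the piece. Applying the adjunction inequality to every component in both doubles and summing yields $n \geq r_1+r_2$, where $n=S\cdot F$ and $r_1+r_2$ is the total number of components of the cut-open sphere. The contradiction then comes not from either summand separately but from elementary topology of $S^2$: the $n$ disjoint circles $B$ cut the sphere $S$ into exactly $n+1$ pieces, so $r_1+r_2=n+1>n$. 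This simultaneous, global count over both halves --- made possible by the doubling trick --- is what replaces the ``jointly incompatible constraints'' you hoped for but could not pin down.
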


\begin{proof}
Let $X$ be a $4$-manifold which is a fiber sum of two nontrivial relatively minimal genus $g \geq 1$ Lefschetz fibrations, $(X,f)= (X_1, f_1) \, \#_{F, \Phi} \, (X_2, f_2)$. By the nontriviality of the summands, the fiber $F$ of $(X,f)$ is homologically essential even if the fiber genus is $1$, and we can equip $X$ with a Thurston-Gompf symplectic form $\omega$ which makes the fibers symplectic. Moreover, we can choose an $\omega$-compatible almost complex structure $J$, even a generic one in the sense of Taubes (see e.g \cite{UsherDS}), with respect to which $f$ is $J$-holomorphic (for a suitable choice of almost complex structure on the base $2$-sphere).

If $X$ is not minimal, it follows from Taubes' seminal work on the correspondence between Gromov and Seiberg-Witten invariants on symplectic $4$-manifolds with $b^+>1$ \cite{Ta, Ta2} that any smooth $(-1)$-sphere is homologous to one that is $J$-holomorphic  \cite[Theorem~3.3]{St3}. The same holds when $b^+(X)=1$, possibly after changing the orientation of the $(-1)$-sphere, provided its pairing with the canonical class is $\pm 1$ \cite[Theorem~A]{Li1}. (Note that the symplectic representative in \cite[Theorem~A]{Li1} is obtained as a $J$-holomorphic curve with nontrivial Gromov-Taubes invariant.) 

It follows that we have a $J$-holomorphic $(-1)$-sphere $S$ in $X$, and $f|_S: S \to S^2$ is a $J$-holomorhic covering map. Thus $S$ is a (branched) multisection of $(X,f)$, i.e. it intersects all but finitely many fibers positively at exactly $n=S \cdot F$ points.

By isotoping $S$ if necessary, we can assume that $S$ decomposes as $S=S_1 \cup_{B} S_2$ where each $S_i$ is a (possibly disconnected) multisection of $f_i|_{X_i \setminus \nu(F)}$ with all the branched points in the interior. Thus, $B = \partial S_1 = - \partial S_2$ is a disjoint union of $n$ circles. Below, we will use the short-hand notation $D_k$ for a $2$-sphere with $k \geq 1$ disks removed, where $\Sigma_g$ denotes the closed orientable surface of genus $g$. Let
\[ S_1 = D_{k_1} \sqcup \ldots \sqcup D_{k_{r_1}} \, \text{and} \, \, S_2 =  D_{l_1} \sqcup \ldots \sqcup D_{l_{r_2}} \, , \]
where $n = \sum_{i=1}^{r_1} k_i = \Sigma_{j=1}^{r_2} l_j$. 

Now take the untwisted fiber sum $(\D X_i, \D f_i)=(X_i,f_i) \, \#_F \, (X_i, f_i)$. Then $\D X_i$ is a symplectic $4$-manifold with $b^+>1$ for each $i=1,2$, as $(X_i,f_i)$ are assumed to be nontrivial \cite{St1}. In each ``double'' $\D X_i$, we also get a double of $S_i$, which, by the decompositions above, yield
\[ \D S_1= \D D_{k_1} \sqcup \ldots \sqcup \D D_{k_{r_1}}  = \Sigma_{k_1-1} \sqcup \ldots \sqcup \Sigma_{k_{r_1}-1}, \, \text{and} \, , \]
\[ \D S_2 = \D D_{l_1} \sqcup \ldots \sqcup \D D_{l_{r_2}} =  \Sigma_{l_1-1} \sqcup \ldots \sqcup \Sigma_{l_{r_2}-1} \, . \]
By the Seiberg-Witten adjunction inequality, the self-intersection of each one of the $\Sigma_g$ with $g\geq 1$ component is bounded above by $2g-2$. On the other hand, the self-intersection of each sphere component $\Sigma_0$ is bounded above by $-1$, since any homologically essential sphere (which is the case here, as each $\Sigma_0$ intersects the fiber at one point) in a symplectic $4$-manifold with $b^+>1$ has negative self-intersection. Moreover, each $\Sigma_{k_i-1} = \D D_{k_i}$ has self-intersection twice the number of that of $D_{k_i}$ (rel its boundary), and the same goes for each $\Sigma_{l_j-1}$. In particular, each sphere should have self-intersection $\leq -2$.

Hence, we arrive at the inequality
\[-2 = (\D S_1)^2 + (\D S_2)^2 \leq \sum_{i=1}^{r_1} (2(k_i-1) - 2) + \sum_{j=1}^{r_2} (2(l_j-1) - 2) \, , \]
implying
\[ -1 \leq \sum_{i=1}^{r_1} k_i + \sum_{j=1}^{r_2} l_j - 2 (r_1+ r_2) = 2 n - 2 (r_1+r_2). \]
So we get $n  \geq r_1 + r_2 $. However, $B$, which is a disjoint union of $n$ circles, splits the $2$-sphere $S$ into $n+1= r_1 + r_2$ components. The contradiction implies that $X$ could not admit such a smooth $(-1)$-sphere $S$. 

\end{proof}

What follows is a restriction on the symplectic topology of $4$-manifolds that arise as fiber sums of Lefschetz fibrations:

\begin{cor} \label{ruled}
A symplectic $4$-manifold which is a fiber sum of two nontrivial relatively minimal Lefschetz fibrations cannot be a rational or a ruled surface, nor can have a torsion canonical class (i.e. cannot have $\kappa \leq 0$), with the sole exception of the $\K$ surface. 
\end{cor}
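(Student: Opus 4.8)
The plan is to exploit the minimality of $X$ furnished by Theorem~\ref{minimality} in order to read the symplectic Kodaira dimension directly off $X$ and its canonical class $\kappa_X$, and then to exclude the two possibilities $\kappa(X)=-\infty$ and $\kappa(X)=0$ one at a time. Since $X=(X_1,f_1)\,\#_{F,\Phi}\,(X_2,f_2)$ is minimal, the recipe for $\kappa$ recalled in Section~2 applies with $(X_{\text{min}},\omega_{\text{min}})=(X,\omega)$, so that $\kappa(X)=-\infty$ exactly when $X$ is rational or ruled, while $\kappa(X)=0$ is the remaining case with $\kappa\le 0$. Throughout I would keep in view the symplectic fiber $F$ of genus $g\ge 1$, together with the two facts that $[F]^2=0$ and that, by the symplectic adjunction formula, $\kappa_X\cdot[F]=2g-2$.

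First I would dispose of $\kappa(X)=-\infty$. The key input is that a fiber sum of two nontrivial Lefschetz fibrations satisfies $b^+(X)>1$, by Stipsicz \cite{St1} (the same source used for the doubles in the proof of Theorem~\ref{minimality}). On the other hand, every rational or ruled surface has $b^+=1$, a property preserved under blow-ups. Hence $X$ can be neither, and $\kappa(X)\neq-\infty$.

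Next I would treat $\kappa(X)=0$. If this held, then, $X$ being minimal, its canonical class $\kappa_X$ would be torsion (Li; see \cite{Li3}), so that $\kappa_X\cdot[F]=0$. Comparing with the adjunction identity $\kappa_X\cdot[F]=2g-2$ forces $g=1$, so $\kappa(X)=0$ can occur only for genus-$1$ fiber sums. For $g=1$ the classification of relatively minimal genus-$1$ Lefschetz fibrations (see \cite{GS}) identifies $(X,f)$ with an elliptic surface $E(n)$, and nontriviality of both summands forces $n\ge 2$; among these only $E(2)=\K$ has $\kappa=0$, whereas $E(n)$ for $n\ge 3$ has $\kappa=1$. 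This yields precisely the advertised exception, and combining the two cases gives $\kappa(X)\ge 1$ unless $X=\K$.

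I expect the only delicate point to be the $\kappa(X)=0$ analysis, which leans on two external facts: that a minimal symplectic $4$-manifold of Kodaira dimension zero has torsion canonical class, and the genus-$1$ classification pinning the total space down to $E(n)$ (hence to $\K$ when $\kappa=0$). If one prefers to avoid the elliptic classification, the genus-$1$ case can instead be closed through Li's homology-type classification of symplectic Calabi--Yau surfaces: the Enriques homology type is ruled out by $b^+(X)>1$, and any $T^2$-bundle over $T^2$ is ruled out since it has $e=0$, whereas a nontrivial genus-$1$ Lefschetz fibration has $e(X)$ equal to its positive number of vanishing cycles, leaving only the $\K$ homology type.
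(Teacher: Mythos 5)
Your proposal is correct, but it reaches the rational/ruled exclusion by a genuinely different and considerably shorter route than the paper. The paper never mentions $b^+$ in this proof: it uses Theorem~\ref{minimality} to reduce to the \emph{minimal} rational/ruled surfaces $\CP$ and the $S^2$-bundles over $\Sigma_h$, rules out $\CP$ by hand, then plays the fiber-sum formula for $c_1^2$ against Stipsicz's bound $c_1^2 \geq 4-4g$ from \cite{St4} and Li's theorem \cite{Li2} to force $h\leq 1$ and the summands to be minimal ruled, and finally analyzes the possible fiber classes on $\Sigma_h \x S^2$ and $\Sigma_h \widetilde{\x} S^2$ via adjunction ($g(F)=1+m(h-1)$, resp. $g(F)=1+2n(h-1)$) to land in the impossible case of a nontrivial genus-$1$ fibration on a minimal ruled surface. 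You instead quote Stipsicz's lemma from \cite{St1} that a fiber sum of two nontrivial Lefschetz fibrations has $b^+>1$ and note that every rational or ruled surface has $b^+=1$; this is legitimate (the paper itself invokes exactly this fact for the untwisted doubles in the proof of Theorem~\ref{minimality}, and Stipsicz's indecomposability theorem rests on it for arbitrary, possibly twisted, sums), it bypasses \cite{Li2} and the fiber-class analysis entirely, and it does not even use minimality for this half. What the paper's longer route buys is precisely that fiber-class/adjunction computation on ruled surfaces, which is quoted verbatim in the Appendix to repair Remark~4.3 of \cite{St2}; your route would leave that later argument without its input. On the torsion-canonical-class half the two arguments essentially coincide (adjunction forces $g=1$, then the genus-$1$ classification gives $E(n)$ with $n\geq 2$, hence $\K$), except that you insert an unnecessary detour through Li's theorem that minimal symplectic $4$-manifolds with $\kappa=0$ have torsion canonical class \cite{Li3}: since the statement to be proved concerns torsion canonical class, you can start from torsion $\kappa_X$ directly, as the paper does, and avoid that deep input. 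One small caution: your fallback ending via the homology classification of symplectic Calabi--Yau surfaces only pins down the rational homology type of $X$, not its diffeomorphism type, so on its own it would identify the exception as a homology $\K$ rather than $\K$ itself; your main line through the elliptic classification is the one that proves the corollary as stated.
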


\begin{proof}
By the above theorem, $X$ is minimal. The only \textit{minimal} rational or ruled surfaces are $\CP$ and $S^2$-bundles over $\Sigma_h$, for $h \geq 0$. Since the fiber of a nontrivial genus $g \geq 1$ Lefschetz fibration is a homologically essential self-intersection zero class in $H_2(X)$, it is an elementary observation that $\CP$ does not admit any Lefschetz \textit{fibrations} at all. 

Now let a Lefschetz fibration on the minimal ruled surface $X$ be a fiber sum of $(X_i,f_i)$, $i=1,2$. Any relatively minimal Lefschetz fibration satisfies $c_1^2(X_i) \,  \geq 4-4g$ \cite{St4}. So the equation
\[ c_1^2(X)= c_1^2(X_1)+c_1^2(X_2)+8g-8 \,  \]
combined with this inequality implies that $8-8h \geq 0$, which can only hold when $h \leq 1$. Moreover, assuming $g \geq 2$ (where it is obvious for $g=1$), we should have $c_1^2(X_1)=c_1^2(X_2) = 4g-4$, which is only possible when each $X_i$ is a ruled surface \cite{Li2}. Since signature is additive for fiber sum, each $X_i$ should be minimal as well. Let us now consider this remaining case. 

The fiber class of a Lefschetz fibration on a ruled surface can only be $m  R_0$, for $R_0$ self-intersection zero section of the degree $0$ ruling on $\Sigma_h \x S^2$, whereas it is $n(2R_1+S)$ for $R_1$ self-intersection $-1$ section and $S$ a fiber of the degree $1$ ruling on $\Sigma_h \widetilde{\x} S^2$. Note that in the former case $m S$ is not an option, since by tubing between $m$ copies of the sphere fiber we would obtain a genus $0$ representative in the same homology class of positive genus fiber $F$, which, being a symplectic surface, should minimize the genus in its homology class. By the same argument we see that $h \neq 0$ for $\Sigma_h \x S^2$, which is already the case for $\Sigma_h \widetilde{\x} S^2$ due to minimality. This leaves $h=1$ as the only possiblity.

Since the symplectic structure on a minimal ruled surface is unique up to deformations and symplectomorphisms \cite[Theorem~B]{Li1}, we can apply the adjunction formula and derive 
\[ g(F)=1+m(h-1), \text{and} \, \, g(F)= 1 + 2n(h-1) \, , \]
respectively. In either case, we get $g(F)=1$ for $h=1$, but no minimal ruled surface is the total space of a 
nontrivial genus $1$ Lefschetz fibration. This concludes that $X$ cannot be rational or ruled.

Lastly, if $X$ is a minimal symplectic $4$-manifold with torsion canonical class, by the adjunction \textsl{equality} we have $2g-2 = \eu(F) = F^2 + K \cdot F = 0$, which implies that $g=1$. By the classification of genus $g=1$ Lefschetz fibrations, this is only possible if $X = \K$. 
\end{proof}

\vspace{0.1in}
\section{Lefschetz fibrations with unique decompositions}

We now prove our second main result of the paper: 

\begin{theorem} \label{unique}
For any $g$, except possibly for $g=$\small{${2,3,4,5,7,9,11,13}$}, and any $m \geq 2$, there are relatively minimal genus $g$ Lefschetz fibrations on infinitely many minimal \linebreak $4$-manifolds $X_m(k)$, $k \in N$, each of which decomposes uniquely into $m$ indecomposable summands $X_0(k)$ up to diffeomorphism.
\end{theorem}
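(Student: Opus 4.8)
The plan is to realize $(X_m(k),f)$ as an $m$-fold untwisted fiber sum of a single indecomposable building block $(X_0(k),f_0)$ and to extract uniqueness from the additivity of the fiber sum under $c_1^2$ and $\sigma$. For the building block I would begin with Stipsicz's classification of genus $g$ Lefschetz fibrations on ruled surfaces \cite{St2}: for the genera under consideration one obtains a relatively minimal genus $g$ fibration whose total space is a ruled surface $\Sigma_h \x S^2$ or $\Sigma_h \widetilde{\x} S^2$, with $\sigma(X_0)=0$ and $c_1^2(X_0)$ the value dictated by the admissible fiber class (governed by $g=1+m(h-1)$, resp. $g=1+2n(h-1)$, as recalled in the proof of Corollary~\ref{ruled}). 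Such a fibration is automatically indecomposable by Corollary~\ref{ruled}, since a ruled surface is never a fiber sum of two nontrivial relatively minimal fibrations. Taking $(X_m(k),f)$ to be the $m$-fold fiber sum, Theorem~\ref{minimality} immediately yields that $X_m(k)$ is minimal for every $m\geq 2$, which settles the first assertion.

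To produce infinitely many pairwise non-diffeomorphic total spaces, I would introduce the parameter $k$ through a surgical operation carried out on the building block that preserves its genus $g$ fibration, relative minimality, and indecomposability, while varying its Seiberg--Witten invariants. The natural route is a Fintushel--Stern knot surgery along a suitable square-zero torus compatible with $f_0$, the Alexander polynomials separating the resulting smooth structures; alternatively one may vary the gluing within the fiber sum or move the building block inside Stipsicz's families. Since $\sigma$ and the number of critical points are additive under fiber sum, all the $X_m(k)$ share the same characteristic numbers, so the family is genuinely exotic and the distinction must, and can, be read off from Seiberg--Witten theory. Checking that the surgery keeps the block fiber-summable and indecomposable and truly separates infinitely many $X_m(k)$ is the bookkeeping portion of this step.

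The core is uniqueness. Here I would use that both invariants are additive, via $c_1^2(X_1\#_F X_2)=c_1^2(X_1)+c_1^2(X_2)+8g-8$ and $\sigma(X_1\#_F X_2)=\sigma(X_1)+\sigma(X_2)$. Suppose $X_m(k)=Y_1\#_F\cdots\#_F Y_m$ is any decomposition into $m$ indecomposable relatively minimal genus $g$ fibrations, which for $g>1$ is the regime the statement fixes. Summing the first relation telescopes the cross terms and leaves $\sum_{i=1}^m c_1^2(Y_i)=m\,c_1^2(X_0(k))$, so the $c_1^2(Y_i)$ average to the value attained by the ruled building block. Combining this with the lower bound $c_1^2(Y_i)\geq 4-4g$ for relatively minimal genus $g$ fibrations \cite{St4}, and choosing the block so that $c_1^2(X_0(k))$ realizes this extremal value, forces each $c_1^2(Y_i)=4-4g$; by the equality case \cite{Li2} invoked in Corollary~\ref{ruled}, every $Y_i$ is then a ruled surface. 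Stipsicz's classification \cite{St2}, in the corrected form proved in the Appendix, finally pins down the genus $g$ fibration on each such ruled surface and gives $Y_i\cong X_0(k)$ up to diffeomorphism. When $c_1^2$ alone does not land on $4-4g$ I would run the same scheme tracking $\sigma$ in tandem with $c_1^2$ to isolate the ruled summands.

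I expect two delicate points. The genuine obstacle is the rigidity step: upgrading ``$Y_i$ is a ruled surface of extremal $c_1^2$'' to ``$Y_i$ is diffeomorphic to the specific $X_0(k)$'' requires Stipsicz's result to be a \emph{uniqueness} statement rather than mere existence, which is exactly why the Appendix's revision of \cite{St2} is needed and why the argument is cleanest when the block attains $c_1^2=4-4g$. The second is realizability: this extremal scheme demands a ruled building block whose fiber class forces the minimal $c_1^2$, and the genera for which such a fibration exists, together with the small-genus gaps in the available monodromy factorizations, are precisely what produce the short exceptional list $g\in\{2,3,4,5,7,9,11,13\}$. The case $g=1$ is the classical elliptic statement and needs no extremal input.
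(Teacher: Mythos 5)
Your skeleton---build a block on a ruled surface realizing the extremal $c_1^2$, self-sum it $m$ times, use additivity of $c_1^2$ together with the bounds of Stipsicz \cite{St4} and Li \cite{Li2} to force every summand of an arbitrary decomposition to be ruled, then invoke Stipsicz's uniqueness from \cite{St2}---is indeed the paper's strategy, but your building block is wrong in a way that breaks the scheme. Stipsicz's extremal fibrations do \emph{not} live on the minimal ruled surfaces $\Sigma_h \x S^2$, $\Sigma_h \widetilde{\x} S^2$ with $\sigma(X_0)=0$; they live on the blow-ups $\Sigma_{g/2} \x S^2 \# 4\CPb$ ($g$ even, $\sigma=-4$) and $\Sigma_{(g-1)/2} \x S^2 \# 8\CPb$ ($g$ odd, $\sigma=-8$), with explicit monodromies due to Korkmaz \cite{Korkmaz}; these are precisely the surfaces with $c_1^2 = 4-4g$. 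A minimal ruled surface can never realize the extremal value you need: by \cite[Proposition~4.4]{Li2} a ruled surface over $\Sigma_h$ admits no genus $g<2h$ fibration, so $h \leq g/2$ and $c_1^2(\Sigma_h \x S^2) = 8(1-h) \geq 8-4g > 4-4g$. Hence ``choosing the block so that $c_1^2(X_0(k))$ realizes this extremal value'' is impossible for the blocks you propose. (With the correct, non-minimal blocks, indecomposability follows at once from Theorem~\ref{minimality}.) Your mechanism for the infinite family is also unworkable: any $4$-manifold carrying a nontrivial relatively minimal genus $g\geq 2$ fibration with $c_1^2 = 4-4g < 2-2g$ must be rational or ruled by \cite{Li2}, hence has standard Seiberg--Witten invariants, so no surgery can alter the SW invariants of the block while preserving its numerical profile and fibration; moreover, knot surgery compatible with a fibration raises the fiber genus (cf.\ Remark~\ref{KnotSurgery}). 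The paper instead twists the gluing of a single summand by $\Phi_k = t_{a_1}^k \x \mathrm{conj}$ and distinguishes the resulting manifolds by $H_1(X_m(k)) = \Z^{K-1}\oplus(\Z/k\Z)$---an elementary homotopy-theoretic distinction, with no gauge theory and no exotic family.

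The endgame also has a gap. Knowing that each summand $Y_i$ is ruled with $c_1^2(Y_i)=4-4g$ does not pin it down: for instance $\Sigma_{g/2-1}\x S^2 \# 12\CPb$ has the same $c_1^2$, and Stipsicz's theorem is not a classification of genus $g$ fibrations on ruled surfaces but a statement about fibrations attaining the \emph{minimal number of singular fibers} on $b^+=1$ manifolds. The missing step, which the paper supplies, is the critical point count: it is additive under fiber sum, $X_m(k)$ has exactly $m(2g+4)$ (resp.\ $m(2g+10)$) critical points, and each ruled $Y_i$ has $b^+=1$ and hence carries at least $2g+4$ (resp.\ $2g+10$) of them by \cite{St2}; therefore each summand attains the minimum exactly, and only then does Stipsicz's uniqueness statement (as repaired in the Appendix) identify every $Y_i$ with $X_0$. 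Your closing remark about tracking $\sigma$ ``in tandem'' with $c_1^2$ gestures at this, but it is needed in every case, not as a fallback.
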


\begin{proof}
The result is classical for $g=1$, and henceforth we will assume $g \geq 2$. By Stipsicz's work in \cite{St2}, the minimum number of singular fibers of an even genus $g \geq 6$ (resp. odd genus $g \geq 15$) Lefschetz fibration on a symplectic $4$-manifold with $b^+=1$ is attained by a Lefschetz fibration with $2g+4$ (resp. $2g+10$) \emph{only} on the ruled surface $\Sigma_{g/2} \x S^2 \# 4 \CPb$ (resp. $\Sigma_{(g-1)/2} \x S^2 \# 8 \CPb$). This uniqueness phenomenon in $b^+=1$ case is what we will exploit below, and should explain the excluded values for $g$ in the statement of our theorem. 

Depending on the parity of $g$, let $(X_0,f_0)$ denote the relatively minimal genus $g$ Lefschetz fibration, with $2g+4$ (resp. $2g+10$) singular fibers when $g$ is even (resp. odd) on the above ruled surfaces. Explicit monodromies of such fibrations are obtained by Korkmaz in \cite{Korkmaz}, from which all we need is the following: Let $K=g/2$ for even $g$ and $(g-1)/2$ for odd $g$. It is easily seen that $H_1(X_0)$ is freely generated by the homology classes of curves $\{a_i, b_i \, | \, i = 1, \ldots, K \}$ on the fiber $F \cong \Sigma_g$, where $a_i, b_i$ are standard generators of $\pi_1(F)$. (These generators are given in Figure~4 of \cite{Korkmaz}.) Let $\phi_k$ be the self-diffeomorphism of $F$ given by $t_{a_1}^k$, for any $k \geq 0$. We set $(X_m(k),f_m(k))$ to be the untwisted fiber sum of $(m-1)$ copies of $(X_0, f_0)$ and a twisted sum with one copy of $(X_0,f_0)$, where the twisted boundary gluing is given by the fiber-preserving, orientation-reversing diffeomorphism $\Phi_k = \phi_k \x \text{conj}$ on $F \x S^1$. A straightforward calculation shows that $H_1(X_m(k)) = \Z^{K-1} \oplus (\Z / k \Z)$, so for each fixed $m$ and $g$, we have an infinite family of pairwise non-homotopic $4$-manifolds $X_m(k)$, for varying $k \in \N$. 

The rest of our arguments will work for any $m \geq 1$, $k \geq 0$, and $g$ as in the statement, so let us drop the extra decorations and simply continue with\linebreak $(X,f)=(X_m(k),f_m(k))$. We claim that $(X,f)$ bears all the properties we listed in the theorem. By Theorem~\ref{minimality}, $X$ is minimal, and $X_0$ is indecomposable. Thus, per our construction, $(X,f)$ does decompose as a fiber sum of $m$ indecomposable Lefschetz fibrations, which are all copies of $(X_0,f_0)$. We are left with showing that this is a unique decomposition.

First, note that $c_1^2$ of any fiber sum of Lefschetz fibrations $(X_i, f_i)$, for\linebreak $i=1, \ldots, m$, calculates as \,  $\sum_{i=1}^m c_1^2(X_i) + 8(m-1)(g-1)$, which can be seen by standard Euler characteristic and signature calculations for fiber sums. Applying this to $(X,f)$ which is fiber sum of $m$ copies of $(X_0,f_0)$, we get 
\[ c_1^2(X)= m \, c_1^2(X_0) + 8(m-1)(g-1)= 4m(1-g) + 8(m-1)(g-1) \]
no matter whether $g$ is even and odd. 

Now, assume that $(X,f)$ can be expressed as a fiber sum of $m$ fibrations $(X_i,f_i)$, $i=1, \ldots, m$, which are not necessarily indecomposable (i.e. there are possibly more than $m$ indecomposable summands). By Li's main theorem in \cite{Li2}, we have the inequality $c_1^2(X_i) \geq 2(1-g)$ when $X_i$ is not ruled, and as observed by Stipsicz \cite{St4}, we have $c_1^2(X_i) \geq 4(1-g)$ in general. It follows that, unless all $X_i$ are ruled, we have
\[ (m-1) 4(1-g) + 2(1-g)  \leq \, \sum_{i=1}^n c_1^2(X_i) = c_1^2(X) - 8(m-1)(g-1) \, . \]
From the calculation $c_1^2(X) = 4m(1-g) + 8(m-1)(g-1)$, we get 
\[ (m-1) 4(1-g) + 2(1-g) \leq 4m(1-g) \, , \]
which now implies $g \leq 1$. As we assumed $g\geq 2$, all $X_i$ should be ruled surfaces. 


Hence, we see that $(X,f)$ can only be written as a sum of exactly $m$ Lefschetz fibrations on ruled surfaces $(X_i,f_i)$. Since it has $m$ times the minimum number of singular fibers allowed on a ruled surface, each $(X_i, f_i)$
should attain the minimum number of singular fibers possible, which, in turn, shows that $X_i= \Sigma_{g/2} \x S^2 \# 4 \CPb$ when $g$ is even and $\Sigma_{(g-1)/2} \x S^2 \# 8 \CPb$ when it is odd. This completes the proof of the theorem.
\end{proof}

\begin{remark}
In the classical $g=1$ case, the proof of the unique factorization hinges on the rather simple structure of the genus $1$ mapping class group: one can classify all possible monodromy factorizations of $g=1$ Lefschetz fibrations up to Hurwitz equivalence, and derive the uniqueness of the summands up to diffeomorphism from this. There is little to no chance of implementing a similar proof when $g>1$ however, due to the far richer structure of the corresponding mapping class group. In the proof of Theorem~\ref{unique}, we overcame this difficulty by taking an alternate approach that heavily depends on symplectic geometry and Gauge theory.
\end{remark}

\begin{remark} \label{gap}
For the missing $g$ values not covered in Theorem~\ref{unique}, we can modify our proof by employing an indeterminate nontrivial Lefschetz fibration on a ruled surface with minimal number of singular fibers as one of the summands of $(X,f)$. However, to strike all the essential points in the proof such as making sure that for any fiber sum decomposition of $(X,f)$ the summands $X_i$ are ruled, one would still need to constrain $g$, only allowing a couple additions to our list in the statement, while losing the explicit nature of our construction. 
\end{remark}

\begin{remark} \label{KnotSurgery}
Refining our choice of fiber sum gluings (and arguments to follow), we can furthermore obtain an infinite family of Lefschetz fibrations by fiber summing two standard fibrations $(X_0,f_0)$ so that the resulting $4$-manifolds are pairwise homeomorphic but not diffeomorphic. Using fibered knots with same genus $g'$ but different Alexander polynomials, one can get relatively minimal genus $n-1+2g'$ Lefschez fibrations on knot surgered elliptic surfaces $E(n)$, which are examples of this kind \cite{FS}. For $E(2)= \K$, these are in fact twisted fiber sums of the genus $g$ Lefschetz fibrations on ruled surfaces $X_0= \Sigma_{g/2} \x S^2 \# 4 \CPb$ we have used in our proof above. 
\end{remark}

The examples we produced by self-sums of $(X_0,f_0)$ in the proof of Theorem~\ref{unique} mimic the case of elliptic fibrations. Reflecting on the challenges with detecting the uniqueness of decompositions, the first question we have is:

\begin{problem}
Find uniquely decomposing genus $g > 2$ Lefschetz fibrations which are not self-sums of the same fibration.
\end{problem}
\vspace{-0.2cm}
\noindent Here a unique summand should be understood as a trivial self-sum. 

\begin{remark}
It seems possible to find some sporadic examples when the fibration decomposes into $m=2$ indecomposable summands. When $g=2$, we can find some examples \textit{with reducible fibers} that are not self-sums and yet admit unique decompositions. Here our arguments rely heavily on the hyperellipticity of the genus-$2$ mapping class group. As in our proof of the above theorem, one still needs to appeal to deeper results from gauge theory and symplectic geometry to argue unique decomposability.
 
Before we present an example, let us introduce some practical notation: let $n$ denote the number of nonseparating and $s$ denote the number of separating vanishing cycles in a given genus-$2$ Lefschetz fibration $(X,f)$, and in this case, call $(X,f)$ of \textit{type $(n,s)$}. Note that this records the topology of fibers completely, since there is only one topological type of non-trivial separating cycle on a genus-$2$ surface. Since the first homology group of the mapping class group of the genus-$2$ mapping class group is $\Z_{10}$, where any Dehn twist along a nonseparating curve corresponds to $\bar{1}$ and any Dehn twist along the nontrivial separating curve corresponds to $\bar{2}$, we have the relation $n+2s  \equiv 0$ mod $10$. 

Since $8g-8+s \leq 5 n$ for any relatively minimal Lefschetz fibration (See the proof of Theorem~1.3 in \cite{St4}), we have $8+s \leq 5n$. Coupled with the congruence relation above, we conclude that the smallest number of critical points a nontrivial genus-$2$ Lefschetz fibration can have is $7$, which is of type $(4,3)$. An explicit monodromy of such a fibration $(X_1,f_1)$ is given in \cite{BaykurKorkmaz}. The second smallest, which is of type $(6,2)$, can be realized by Matsumoto's famous genus-$2$ Lefschetz fibration $(X_2,f_2)$. Letting $(X,f)$ be the (say, untwisted) fiber sum of the two, we clearly see that $(X,f)$ can only decompose into type $(4,3)$ and $(6,2)$ fibrations. Moreover, the summands are uniquely determined up to diffeomorphisms: any $X_1$ of type $(4,3)$ and any $X_2$ of type $(6,2)$ are diffeomorphic to $S^2 \x T^2$ blown-up $3$ times and $4$ times, respectively; see Proposition~$4.1$ \cite{Sa1}. (Note that, in contrast to Stipsicz's aforementioned result, here $b^+(X_1)=1$ and $f_1$ has less than $2g+4=8$ critical points.) Hence $(X,f)$ uniquely decomposes into $2$ unequal summands.
\end{remark}

\smallskip
Lefschetz fibrations convey different features depending on the symplectic Kodaira dimension of the underlying symplectic $4$-manifold. We see that there are uniquely decomposing Lefschetz fibrations for all Kodaira dimensions and for almost all $g \geq 2$: By Corollary~\ref{ruled}, rational and ruled surfaces ($\kappa = - \infty$), as well as symplectic $4$-manifolds with torsion canonical class ($\kappa = 0$) are indecomposable. As seen in the proof of Theorem~\ref{unique}, we have $c_1^2(X_m)=0$  for $m=2$ ($\kappa = 1$, as it cannot be $0$), and  $c_1^2(X_m(k))>0$  for $m>2$ ($\kappa= 2$). Here, the borderline $\kappa = 1$ case appears to be the most interesting. Note that Fintushel and Stern's Lefschetz fibrations on knot surgered elliptic surfaces discussed in Remark~\ref{KnotSurgery} provide a large family of examples with $\kappa=1$. We therefore ask:

\begin{problem}
Can a relatively minimal Lefschetz fibration $(X,f)$ with symplectic Kodaira dimension $\kappa = 1$ be decomposed into two summands in more than one way (up to diffeomorphisms of the summands)?
\end{problem}


\vspace{0.2in}
\refstepcounter{secnumdepth}
\refstepcounter{section}

\section*{Appendix: the minimum number of singular fibers on ruled surfaces}

Our proof of Theorem~\ref{unique} relied on the following results of Stipsicz from \cite{St2}: The minimum number of singular fibers of a nontrivial Lefschetz fibration on a $4$-manifold with $b^+=1$ is $2g+4$ for $g \geq 6$ and even, $2g+10$ for $g \geq 15$ and odd, and these bounds are sharp \cite[Theorem~1.1.(1)-(2)]{St2}. Moreover, in the course of the proof of this result \cite[Sections~4.1 and 4.2]{St2}, it is observed that these minimum values can be realized only on ruled surfaces, and these ruled surfaces are uniquely determined as $\Sigma_{g/2} \x S^2 \# 4\CPb$ and $\Sigma_{(g-1)/2} \x S^2 \# 8\CPb$, respectively. To eliminate some cases, these proofs make repeated use of \cite[Theorem~1.4]{St2} (also labeled as \cite[Theorem~2.9]{St2}), which states that the fiber class of a nontrivial Lefschetz fibration is primitive. However, there is a mistake in the proof of the latter:\footnote{Nevertheless, the statement would obviously hold whenever the fibration admits an honest section.} In the non-simply-connected case, it is claimed that the fiber class being primitive in a fiber sum of Lefschetz fibrations would imply that it is also primitive in the summands. However, this is not guaranteed due to the extra handles one gets from the regular neighborhood $\nu(F)$ when reconstructing $(X,f)$ from $X \setminus \nu(F)$. While we do not have a fix for this argument, we will show that all other results of \cite{St2} are correct, by employing arguments that replace the use of the problematic Theorem~1.4/2.9. Reassuring the reader of the validity of these results we have relied on in our article is the purpose of this Appendix. 

Henceforth, we follow the labeling in the article \cite{St2}. There are four instances that require our attention.  Theorem~1.4/2.9 is used in Lemma~4.4 and Lemma~4.7 to rule out certain cases where the fiber class would be non-primitive. In Remark~4.3, it is claimed that one can use the method described in Lemma~4.7 to prove that the minimal ruled surfaces $\Sigma_h \widetilde{\x} S^2$ and $\Sigma_h \x S^2$ do not admit genus $g=2h+2$ Lefschetz fibrations. Lastly, in the paragraph preceding Remark~4.8, it is claimed that one can use arguments similar to those in Lemma~4.4 to exclude the remaining cases. All these constitute parts of the proof of Theorem~1.1.

In the proof of Lemma~4.4, Theorem~2.9 is used to argue that the homology class of the fiber, which is $b$ times a sphere fiber of a ruled surface, is only possible when $b= \pm 1$. Instead, one can observe that such a class can be represented by tubing between $b$ disjoint copies of the sphere fiber, which therefore has a genus $0$ representative. This however contradicts the fact that the positive genus fiber, being a symplectic surface, should realize the minimum genus in its homology class. Moreover, the claim that precedes Remark~4.8 is now valid, provided we substitute the use of Theorem~2.9 with the argument above.

The proof of Lemma~4.7 is rather problematic, but in this case, we can invoke \cite[Proposition~4.4]{Li2}, which, by a degree argument, readily states that \emph{no} ruled surface over $\Sigma_h$ admits a genus $g$ Lefschetz fibration with $g < 2h$, covering what is claimed here and more. 

Remark~4.3 requires a bit more work. Recall from our proof of Corollary~\ref{ruled} in this paper that the fiber genus of a Lefschetz fibration on $X=\Sigma_h \x S^2$ is $g(F)=1+m(h-1)$, and on $X=\Sigma_h \widetilde{\x} S^2$ it is $g(F)= 1+2n(h-1)$. As no ruled surface admits a nontrivial genus $1$ Lefschetz fibration, we should have $m,n \geq 1$, $h \geq 2$. Since the number of singular fibers is $4(g-h)$, provided $m \geq 3$ or $n \geq 2$, the assumptions $g \geq 6$ for $g$ even and $g \geq 15$ for $g$ odd force the fibration to attain more than the proposed minimum number of singular fibers. The remaining $m=2$ or $n=1$ case means $g=2h-1$, which is ruled out by \cite[Proposition~4.4]{Li2} again.


\vspace{0.2in}

\end{document}